\documentclass[conference]{IEEEtran}
\IEEEoverridecommandlockouts
\usepackage{cite}
\usepackage{amsmath,amssymb,amsfonts}
\usepackage{algorithmic}
\usepackage{algorithm}
\usepackage{graphicx}
\usepackage{float}
\usepackage{hyperref}
\usepackage{subcaption}
\usepackage{tikz}
\usetikzlibrary{positioning,arrows.meta}
\usepackage{textcomp}
\usepackage{circuitikz}
\usetikzlibrary{fit, positioning}
\usepackage[percent]{overpic}  
\usepackage{xcolor}
\def\BibTeX{{\rm B\kern-.05em{\sc i\kern-.025em b}\kern-.08em
    T\kern-.1667em\lower.7ex\hbox{E}\kern-.125emX}}

\newtheorem{lemma}{Lemma}

\newtheorem{remark}{Remark}

\newcommand{\X}{\mathbb{X}}
\newcommand{\U}{\mathbb{U}}
\newcommand{\Hx}{\mathcal{H}_x}
\newcommand{\Hu}{\mathcal{H}_u}
\newcommand{\Haug}{\mathcal{H}}
\newcommand{\Psix}{\Psi_x}
\newcommand{\Psiu}{\Psi_u}
\newcommand{\nz}{n_z}
\newcommand{\nv}{n_v}
\newcommand{\nx}{n_x}
\newcommand{\nuu}{n_u}
\newcommand{\Real}{\mathbb{R}}
\newcommand{\Cplx}{\mathbb{C}}
\newcommand{\Nat}{\mathbb{N}}
\DeclareMathOperator{\diag}{diag}
\DeclareMathOperator{\rank}{rank}

\DeclareMathOperator{\spn}{span}

\newcommand{\kron}{\otimes}
\newcommand{\krrao}{\odot}
\newcommand{\Phimat}{\Phi}

\begin{document}

\title{Koopman operator learning for predictive control via  Khatri-Rao kernel regression}

\author{\IEEEauthorblockN{Mircea Lazar}
\IEEEauthorblockA{\textit{Eindhoven University of Technology}\\
Eindhoven, Netherlands \\
m.lazar@tue.nl}
}

\maketitle

\begin{abstract}
This paper develops a data-driven realization of the generalized Koopman operator (GeKo) \cite{Lazar2025}, in which states and inputs are lifted independently and the dynamics are expressed as a tensor bilinear system. The first contribution is a time-sequenced multi-step Khatri--Rao kernel regression formulation that exposes the operator to evolved snapshots along trajectories rather than only single one-step pairs, which reduces compounded prediction error. Secondly, we develop a kernel- and input-agnostic structured SVD reduction that compresses the lifted state and input spaces while preserving the Khatri--Rao realization. We instantiate the framework with random Fourier features and describe a complete predictive-control pipeline, including a multi-step roll-out diagnostic that guides the choice of MPC horizon. The framework is validated on the chaotic Lorenz system, where the learned reduced-order GeKo model stabilizes an unstable equilibrium from a range of initial conditions.
\end{abstract}

\begin{IEEEkeywords}
Koopman operator theory for control, kernel regression, singular value decomposition, model predictive control
\end{IEEEkeywords}
\newif\ifl
\lfalse  
 
\section{Introduction}
\label{sec:introduction}
 
The Koopman operator framework~\cite{Koopman1931,
Mauroy2020book} casts a (possibly nonlinear) dynamical system as a
\emph{linear} operator acting on a function space of observables. Over
the past decade, finite-dimensional approximations of the Koopman
operator, e.g., Extended Dynamic Mode Decomposition (EDMD)
\cite{Williams2015} and its variants, have become a standard
data-driven tool for system identification and control of nonlinear
systems~\cite{Mauroy2020book,Strasser2026}.
 
The original Koopman operator, however, applies to autonomous systems.
Extending the framework to controlled systems is non-trivial and has
produced a proliferation of approaches as follows. \emph{Lifted linear models}, e.g.\ Dynamic Mode Decomposition
    with control (DMDc)~\cite{Proctor2016} and 
    linear-in-control Koopman~\cite{KordaMezic2018}, treat the input as an external linear
    forcing of the lifted state. The resulting linear MPC formulation
    is computationally attractive, but neglects input nonlinearities
 and input--state coupling, which degrades
    closed-loop performance for non-input-affine
    systems~\cite{Strasser2026}. \emph{Lifted bilinear models}~\cite{KordaMezic2018, Peitz2020,
    GoswamiPaley2022} introduce a separate lifted
    operator per input channel, which handles control-affine systems, but is challenged by
    strongly nonlinear input functions (e.g., saturations, dead-zones, non-monotonic actuation).    \emph{Deep Koopman models}~\cite{Lusch2018} use
    neural-network encoders to learn the lifting, often with
    consistency or contrastive losses. They achieve good empirical
 performance, but lack rigorous convergence theory, and are
    susceptible to local minima during training. Recent works on Koopman MPC \cite{WorthmannCDC2024, DeJongCDC2024, SchimpernaOffset2024, BoldKernelKoopman2025, StrasserSafEDMD2026} provide formal closed--loop guarantees for linear- or bilinear-in-control data-driven Koopman models.
 
A novel theoretical framework that generalizes the Koopman operator to nonlinear systems with inputs was recently proposed in~\cite{Lazar2025}: the \emph{generalized Koopman operator} (GeKo) with a target space defined as the \emph{product Hilbert space} $\Haug = \Hx \otimes \Hu$ formed by the
state observable space $\Hx$ and an independently chosen
input observable space $\Hu$. The GeKo
$\mathcal{K}: \Hx \to \Haug$ acts by composition with the dynamics mapping $F(x,u)$, i.e., $(\mathcal{K} \psi_x)(x,u) = (\psi_x \circ F)(x,u)$, capturing any input
nonlinearity natively, without structural restrictions. In a finite basis it yields
the input--state separable Khatri--Rao realization
$z_{t+1} = K\,(z_t \otimes v_t)$, with lifted state $z_t = \Psix(x_t)$, lifted input $v_t =
\Psiu(u_t)$, and operator matrix $K$. The GeKo realization is provably more general than the lifted linear and bilinear forms (which it recovers as special
cases~\cite{Lazar2025}), opening a promising route to
data-driven predictive control of complex nonlinear systems. While \cite{Lazar2025} introduces the Khatri--Rao kernel regression and illustrates its performance in MPC for a specific example, it does not further investigate how the data-driven learning of GeKo models can be optimized for usage in MPC (which requires low-complexity models with high accuracy).  This is the subject of the present paper.
\begin{remark}
An alternative recent extension of the Koopman operator to control is the \emph{Koopman Control Family} (KCF) approach~\cite{HaseliCortesKCF2026}, defined as the family of Koopman operators associated with all constant-input dynamics. 
Both KCF and GeKo encompass the lifted linear- and bilinear-in-control Koopman forms as special cases. The two points of view are complementary: KCF arrives at a separable Koopman form in the state and input through a family of operators indexed by constant inputs, while GeKo lifts the input directly via $\Psiu$ and produces a unique linear operator on a product Hilbert space, which is separable by construction.
\end{remark}
 
\subsection*{Problem statement and contributions}
 
Given trajectory state--input data from a
nonlinear control system, we seek a finite-dimensional GeKo
realization that is suitable for MPC, i.e., \emph{(i)}~\emph{accurate over multi-step horizons}, not just one-step; \emph{(ii)}~\emph{convergent and well-posed under reduction}, so that the inevitable rank deficiency of large kernel dictionaries can be exploited to compress the model without losing the structure of the GeKo realization.
 
The contributions in this paper address the above goals as follows. After briefly recalling the GeKo framework \cite{Lazar2025} in Section~\ref{sec:geko}, Section~\ref{sec:multistep} develops a multi-step optimized Khatri-Rao kernel regression formulation over \emph{horizontally stacked, time-shifted snapshot pairs} drawn from trajectories, which sharply reduces the compounded prediction error in multi-step roll-outs. Section~\ref{sec:reduction} then derives a structured SVD reduction, applied independently on the state lifting $z = \Psix(x)$ and the input lifting $v = \Psiu(u)$, that compresses the model to the effective rank of the data while preserving the Khatri--Rao GeKo realization. Section~\ref{sec:rff} instantiates the framework with random Fourier features and presents the complete identification and predictive-control pipeline. Section~\ref{sec:experiments} validates the framework on the chaotic Lorenz system, stabilized at an unstable equilibrium from multiple initial states.

\subsection*{Basic notation}
$\Real, \Cplx, \Nat$ denote the sets of real, complex and non-negative
integer numbers. For a matrix $A \in \Real^{m \times n}$, $A^\top$ is
the transpose, $A^\dagger$ the Moore--Penrose pseudoinverse,
$\|A\|_F$ the Frobenius norm, $\|A\|_2$ the spectral norm, and
$\rank(A)$ the rank. $A \kron B$ denotes the Kronecker product and
$A \krrao B$ the Khatri--Rao (column-wise Kronecker) product.
$\spn(\Psi)$ denotes the linear span of the elements of a
vector-valued function $\Psi$. For a positive-semidefinite matrix
$Q$, $\|x\|^2_Q := x^\top Q x$.
 
 
\section{The Generalized Koopman Operator and Its Khatri--Rao Realization}
\label{sec:geko}
 
In this section we recall the generalized Koopman operator of~\cite{Lazar2025} and its finite-dimensional Khatri--Rao realization that will serve as the prediction model in the data-driven MPC pipeline developed in this paper. We consider discrete-time nonlinear control systems of the form
\begin{equation}
    x_{t+1} = F(x_t, u_t), \quad x_t \in \X \subseteq \Real^{\nx},
                              \; u_t \in \U \subseteq \Real^{\nuu},
\label{eq:sys}
\end{equation}
where $F:\X\times\U \to \X$ is, in general, neither linear in the state nor affine in the input. 
To lift~\eqref{eq:sys} into an operator-theoretic representation, let $\Hx \subseteq L^2(\X,\mu_x)$ and $\Hu \subseteq L^2(\U,\mu_u)$ be the Hilbert spaces of state and input observables, and choose finite (truncated) bases
\begin{equation}
    \Psix : \X \to \Real^{\nz}, \qquad
    \Psiu : \U \to \Real^{\nv},
\label{eq:bases}
\end{equation}
that span subspaces of $\Hx$ and $\Hu$ respectively. The corresponding \emph{product Hilbert space} is $\Haug = \Hx \otimes \Hu$, equipped with the tensor-product basis $\Psi:=\Psix \otimes \Psiu$. On this space, the \emph{generalized Koopman operator} (GeKo) of~\cite{Lazar2025} is defined for any $\psi_x\in\Hx$ as
\begin{equation}
    \mathcal{K} :  \Hx \to \Haug,
    \quad
    (\mathcal{K} \psi_x)(x,u) = (\psi_x \circ F)(x,u).
\label{eq:geko}
\end{equation}
As shown in \cite{Lazar2025}, defining the lifted state and input as $z_t := \Psix(x_t)$ and $v_t := \Psiu(u_t)$, respectively, the finite-dimensional realization of~\eqref{eq:geko} in the chosen basis takes the input--state separable form
\begin{equation}
    z_{t+1} = K \, \big(z_t \kron v_t\big),
    \quad K \in \Real^{\nz \times \nz \nv},
\label{eq:khatri-rao}
\end{equation}
where $\kron$ is the Kronecker product. We refer to~\eqref{eq:khatri-rao} as the \emph{Khatri--Rao realization} of the GeKo: the matrix $K$ linearly combines the bilinear state--input features $z_t \kron v_t$ to produce the next lifted state.
 
To fit the matrix $K$ from data, suppose we have a snapshot data set $\{(x_i, u_i, x_i^+)\}_{i=1}^M$ with $x_i^+ = F(x_i, u_i)$, and define the data matrices
\begin{align}
\label{eq:data-matrices}
    Z   &= [\Psix(x_1), \ldots, \Psix(x_M)]   \in \Real^{\nz \times M}, \nonumber\\
    Z^+ &= [\Psix(x_1^+), \ldots, \Psix(x_M^+)] \in \Real^{\nz \times M}, \\
    V   &= [\Psiu(u_1), \ldots, \Psiu(u_M)]   \in \Real^{\nv \times M}. \nonumber
\end{align}
The Khatri--Rao feature matrix is the column-wise Kronecker product
\begin{equation}
    \Phimat = Z \krrao V \in \Real^{\nz\nv \times M},
    \quad \Phimat(:,i) = z_i \kron v_i.
\label{eq:Phi}
\end{equation}
The standard one-step EDMD-type estimator of $K$ is then the ridge-regularized least-squares solution
\begin{equation}
    \hat K
    = \arg\min_K \big\|Z^+ - K\Phimat\big\|_F^2 + \gamma \|K\|_F^2
    = Z^+ \Phimat^\top \big(\Phimat \Phimat^\top + \gamma I\big)^{-1}
\label{eq:edmd}
\end{equation}
with ridge parameter $\gamma > 0$.
 
While~\eqref{eq:edmd} provides a closed-form estimate of the GeKo, it is well known~\cite{KordaMezic2018,HaseliCortes2026,Conradie2026} that minimizing the one-step residual does not imply small multi-step prediction error, since the roll-out predictor compounds errors through the bilinear feature update. This compounding is more severe in the Khatri--Rao realization~\eqref{eq:khatri-rao} than in the autonomous Koopman setting due to the bilinearity in $(z,v)$. The next section addresses this issue by developing a multi-step regression that exposes $K$ to evolved snapshot pairs drawn along trajectories.
 

\section{Time-Sequenced Multi-Step Khatri--Rao Kernel Regression}
\label{sec:multistep}

In this section we develop a regression scheme that fits the finite-dimensional operator $K$ of~\eqref{eq:khatri-rao} from \emph{trajectories} rather than from isolated one-step snapshot pairs, by horizontally stacking time-shifted residuals into a single regression problem. We then introduce a multi-step roll-out diagnostic, evaluated both on the lifted state and on the original state, that quantifies the long-horizon prediction quality of the identified model and guides the choice of MPC prediction horizon.

Suppose we have $L$ training trajectories of the system~\eqref{eq:sys},
each generated by exciting the system with a rich (e.g.\ multi-frequency)
input signal,
\begin{equation}
    \mathcal{D}_\ell = \big\{ (x^{(\ell)}_t, u^{(\ell)}_t)
        : t = 0,\ldots,T_\ell\big\}, \quad \ell = 1,\ldots,L.
\label{eq:trajdata}
\end{equation}
Pick a horizon $N_d \in \Nat$ and define, for each starting time
$t \in \{0,\ldots,T_\ell - N_d\}$ in a trajectory $\ell$, the
time-sequenced snapshot window
\begin{equation}
    \mathcal{W}_{\ell,t}
        = \big\{ (z^{(\ell)}_{t+k},\, v^{(\ell)}_{t+k},\,
                  z^{(\ell)}_{t+k+1})
                  : k = 0,\ldots,N_d-1 \big\},
\label{eq:window}
\end{equation}
where $z^{(\ell)}_t = \Psix(x^{(\ell)}_t)$ and
$v^{(\ell)}_t = \Psiu(u^{(\ell)}_t)$ for all $t\geq 0$.
Let
\[
    \mathcal{S} = \big\{ (\ell_i, t_i) : i = 1,\ldots,M_a \big\}
\]
be a representative set of $M_a$ such windows, where $\ell_i$ and
$t_i$ denote the trajectory index and starting time of the $i$-th
selected window. In practice, $\mathcal{S}$ is constructed by
\emph{window-aware} $k$-means clustering: each window is summarized by
the concatenation of three anchor snapshots
$[z^{(\ell)}_t;\, z^{(\ell)}_{t+\lfloor N_d/2\rfloor};\, z^{(\ell)}_{t+N_d}]$
-- its start, midpoint, and end -- and one window nearest to each cluster
centroid is retained. Using all three anchors rather than the start
$z^{(\ell)}_t$ alone keeps windows that share a starting point but diverge
along the trajectory (a common case for complex or chaotic systems) as distinct
clusters, ensuring broad coverage of the data manifold.
For each time-shift $k \in \{0,1,\ldots,N_d-1\}$, define the
\emph{shifted snapshot blocks}
\begin{equation}
    \Phimat_k \in \Real^{\nz\nv \times M_a}, \quad
    W_k       \in \Real^{\nz \times M_a},
\label{eq:phi-w-blocks}
\end{equation}
whose $i$-th columns are, respectively,
$\Phimat_k(:,i) = z^{(\ell_i)}_{t_i+k} \kron v^{(\ell_i)}_{t_i+k}$
and $W_k(:,i) = z^{(\ell_i)}_{t_i+k+1}$. Stacking these blocks
horizontally yields the \emph{augmented data matrices}
\begin{equation}
    \Phimat := [\Phimat_0,\, \Phimat_1, \ldots, \Phimat_{N_d-1}], \quad
    W := [W_0,\, W_1, \ldots, W_{N_d-1}],
\label{eq:aug-data}
\end{equation}
with $\Phimat \in \Real^{\nz\nv \times M_{\rm aug}}$,
$W \in \Real^{\nz \times M_{\rm aug}}$, and
$M_{\rm aug} := N_d M_a$.

The \emph{time-sequenced multi-step Khatri--Rao kernel regression} estimator is defined by
\begin{equation}
    \hat K := \arg\min_{K \in \Real^{\nz \times \nz\nv}}
        \;\sum_{k=0}^{N_d-1} \big\| W_k - K\Phimat_k \big\|_F^2
        + \gamma \|K\|_F^2.
\label{eq:multistep-ls}
\end{equation}
By the block structure of~\eqref{eq:aug-data}, we show next that problem~\eqref{eq:multistep-ls} is equivalent to the single ridge regression
\begin{equation}
    \hat K = \arg\min_{K \in \Real^{\nz \times \nz\nv}}
        \big\| W - K\,\Phimat \big\|_F^2 + \gamma \|K\|_F^2,
\label{eq:multistep-single}
\end{equation}
with closed-form solution
\begin{equation}
    \hat K
    = W \Phimat^\top \big(\Phimat\Phimat^\top + \gamma I\big)^{-1}.
\label{eq:multistep-soln}
\end{equation}
\begin{lemma}[Frobenius decomposition of block-stacked least squares]
\label{lem:frobenius-decomp}
Let $\Phimat_k, W_k$, $k = 0,\ldots,N_d-1$, be the shifted snapshot
blocks defined in~\eqref{eq:phi-w-blocks}, and let
$\Phimat, W$ denote their horizontal concatenations
as in~\eqref{eq:aug-data}. Then for any matrix
$K \in \Real^{\nz \times \nz\nv}$ and any $\gamma \ge 0$,
\begin{equation}
    \sum_{k=0}^{N_d-1} \|W_k - K \Phimat_k\|_F^2 + \gamma \|K\|_F^2
    \;=\; \|W - K \Phimat\|_F^2 + \gamma \|K\|_F^2.
\label{eq:frob-decomp}
\end{equation}
\end{lemma}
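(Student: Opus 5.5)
The identity is purely algebraic, so I will argue directly from the block structure in~\eqref{eq:aug-data}. The regularization term $\gamma\|K\|_F^2$ is identical on both sides, so it suffices to show that
\[
\sum_{k=0}^{N_d-1}\|W_k - K\Phimat_k\|_F^2 = \|W - K\Phimat\|_F^2 .
\]
The claim then holds for every $\gamma\ge 0$, including $\gamma=0$.

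The first step is to observe that matrix multiplication distributes over horizontal concatenation. For any $K\in\Real^{\nz\times\nz\nv}$ we have
\[
K\Phimat = [K\Phimat_0,\ldots,K\Phimat_{N_d-1}],
\]
since each column of $K\Phimat$ is $K$ applied to the corresponding column of $\Phimat$. Subtracting blockwise then gives
\[
W - K\Phimat = [W_0-K\Phimat_0,\ldots,W_{N_d-1}-K\Phimat_{N_d-1}] .
\]
This uses only that all blocks $\Phimat_k$ share $M_a$ columns and all $W_k$ have $\nz$ rows, so the concatenations are conformable.

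The second step uses the fact that the squared Frobenius norm is the sum of squared Euclidean norms of the columns:
\[
\|A\|_F^2=\sum_j\|A(:,j)\|_2^2 .
\]
Grouping the $M_{\rm aug}=N_dM_a$ columns of $W-K\Phimat$ into the $N_d$ consecutive blocks of $M_a$ columns splits the sum into $\sum_k\|W_k-K\Phimat_k\|_F^2$, which is the desired identity. An equivalent route is through the trace: $\|A\|_F^2=\operatorname{tr}(AA^\top)$ and $AA^\top=\sum_k A_kA_k^\top$ for a horizontal partition $A=[A_0,\ldots,A_{N_d-1}]$.

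There is no genuine obstacle here. The only care needed is the bookkeeping of column indices, so that the $i$-th column of block $k$ sits at global index $kM_a+i$. Once the identity is established, equality of the objective functions on all of $\Real^{\nz\times\nz\nv}$ immediately gives equality of their minimizers. The closed form~\eqref{eq:multistep-soln} then follows from the standard ridge normal equations $K(\Phimat\Phimat^\top+\gamma I)=W\Phimat^\top$, applied exactly as for~\eqref{eq:edmd}.
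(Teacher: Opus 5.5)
Your proof is correct and follows essentially the same route as the paper: $K$ acts blockwise, so $W - K\Phimat = [W_0 - K\Phimat_0,\ldots,W_{N_d-1}-K\Phimat_{N_d-1}]$, and the squared Frobenius norm of a horizontal concatenation is the sum of the blocks' squared Frobenius norms, with the ridge term common to both sides. Your column-wise (or trace) bookkeeping is a slightly more explicit version of the paper's sum-of-squared-entries argument.
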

\begin{IEEEproof}
For any matrices $A_0,\ldots,A_{N-1}$ with the same row dimension,
the definition of the Frobenius norm as the sum of squared entries
gives
$\|[A_0,\ldots,A_{N-1}]\|_F^2 = \sum_{k=0}^{N-1} \|A_k\|_F^2$.
Since $K$ multiplies each block independently,
$[W_0 - K\Phimat_0,\ldots,W_{N-1} - K\Phimat_{N-1}] = W - K\Phimat$,
and \eqref{eq:frob-decomp} follows. The ridge term
$\gamma\|K\|_F^2$ is identical in both expressions.
\end{IEEEproof}

For $N_d=1$ the time-sequenced multi-step Khatri-Rao regression collapses to the standard one-step EDMD estimator~\eqref{eq:edmd}; for $N_d>1$, the regression averages the one-step residual across all $N_d$ time-shift positions of each window, exposing $K$ to evolved snapshots drawn from along the trajectories rather than only window-start states. 

\begin{remark}[Multi-step Koopman learning overview] Fitting a Koopman operator using more than a single time step has been explored before in the lifted \emph{linear-in-control} setting. \cite{KordaMezic2018} uses a multi-step prediction loss to identify the input matrix of a lifted linear predictor, after the autonomous part has been fixed. In \cite{DeJongCDC2024} the observables are parameterized as a neural network and the multi-step output prediction error is minimized during training. Then the horizon-stacked prediction matrices of the linear multi-step Koopman model therein are directly refited by least squares. Recently, \cite{WuMultistep2026} develops an alternative multi-step learning framework for linear Koopman models in which the condensed horizon-dependent state-input mappings are fitted \emph{independently} per prediction step, preserving a convex per-step least-squares structure and supporting parallel computation and $\ell_1$-regularized dictionary pruning. The
common feature of these approaches is that they trade the recursive Koopman operator structure for a fixed-horizon stacked predictor: error compounding is avoided by design, but the resulting model is no longer a single Koopman operator iterable at arbitrary horizon. 
\end{remark}

In contrast with the above approaches the estimator~\eqref{eq:multistep-soln}  fits a \emph{single} matrix $K$ valid across all time shifts, \emph{preserves the closed-form convex ridge solution despite bilinear lifted dynamics} (a major advantage compared to multi-step Koopman deep learning, which is typically non--convex), and remains usable at arbitrary horizons within the recursive prediction using~\eqref{eq:khatri-rao}.

To quantify multi-step prediction quality and to guide the choice of MPC
horizon, we evaluate two complementary relative error profiles over the
set $\mathcal{S}$ of $M_a$ selected windows.
Let $\hat z_k^{(i)}$ denote the $k$-step roll-out of the identified
Khatri--Rao model from $z_0^{(i)} = \Psix(x_0^{(i)})$ using the true input
sequence $\{u_{0:k-1}^{(i)}\}$ of window $i$, let $z_k^{(i)}$ be the
corresponding true lifted state, and let $D$ be the linear decoder
$\hat x = D z$ (constructed in Section~\ref{sec:reduction}). Collecting the
ensemble at step $k$ into matrices
$\hat Z_k = [\,\hat z_k^{(i)}\,]_{i\in\mathcal{S}}$ and
$Z_k = [\,z_k^{(i)}\,]_{i\in\mathcal{S}}$, we define the lifted-state and
decoded-state relative errors as the relative Frobenius-norm errors
\begin{equation}
\begin{split}
    r^{z}_k
    &= \frac{\|\hat Z_k - Z_k\|_F}{\|Z_k\|_F}
    = \!\sqrt{
        \frac{\sum_{i \in \mathcal{S}}
              \|\hat z_k^{(i)} - z_k^{(i)}\|^2}
             {\sum_{i \in \mathcal{S}} \|z_k^{(i)}\|^2}
    },\\
    r^{x}_k
    &= \frac{\|D\hat Z_k - D Z_k\|_F}{\|D Z_k\|_F}
    = \!\sqrt{
        \frac{\sum_{i \in \mathcal{S}}
              \|D\hat z_k^{(i)} - D z_k^{(i)}\|^2}
             {\sum_{i \in \mathcal{S}} \|D z_k^{(i)}\|^2}
    },
\end{split}
\label{eq:multistep-profile}
\end{equation}
for $k = 1,\ldots,N$. We normalize by the signal norm rather than reporting
an absolute error because the states of a dynamical system may differ substantially in magnitude and because the lifted-feature and
decoded-state errors carry different units; the normalization renders both profiles dimensionless and mutually comparable, and lets a single
scale-independent threshold ($1\%$, $5\%$) drive horizon selection.
Aggregating the squared error and the squared signal norm separately over
$\mathcal{S}$ -- rather than averaging per-window ratios -- prevents
low-magnitude windows from dominating the metric. The lifted profile
$r^{z}_k$ measures error growth in the full $\nz$-dimensional feature space
natural to the Koopman operator, while the decoded profile $r^{x}_k$
measures growth in the original state space, relevant to MPC performance.

In our experiments, $r^x_k$ is consistently smaller than $r^z_k$ and degrades more gracefully with $k$, as many feature directions that drift over time are projected out by $D$ when only the state contributes to the controller objective. We thus use $r^x_k$ as the primary horizon-selection diagnostic and report $r^z_k$ as a secondary check. Given a tolerance $\epsilon \in (0,1)$, the MPC prediction horizon can be chosen based on the threshold
\begin{equation}
    N_\text{mpc} \le \max\big\{\, k \in \{1,\ldots,N_d\} \;:\; r^x_k \le \epsilon\,\big\}.
\label{eq:Hp-rule}
\end{equation}


\section{Data Generation and Structured SVD Reduction}
\label{sec:reduction}

Next we specify the input excitation used to generate the trajectories~\eqref{eq:trajdata} and then develop a kernel-agnostic structured SVD reduction of the state and input liftings $\Psix$, $\Psiu$ that compresses the model while preserving the GeKo form~\eqref{eq:khatri-rao}.

\subsection*{Trajectory data generation}

We generate the $L$ training trajectories from initial conditions $\{x_0^{(\ell)}\}_{\ell=1}^L$ drawn to cover $\X$ (e.g.,\ uniformly on a bounding box, or by clustering of a pilot run). Each trajectory is excited with a multi-frequency input
\begin{equation}
    u^{(\ell)}_k \;=\; \mathrm{sat}_{\U}\!\left(
        \sum_{j=1}^{n_f}
        \alpha_j \sin\big(\omega_j\, k T_s + \varphi_j^{(\ell)}\big)
    \right),
\label{eq:input-design}
\end{equation}
with $k \in \mathbb{N}$ the sample index and $T_s$ the sampling time (so that $k T_s$ is the elapsed time), frequencies $\{\omega_j\}_{j=1}^{n_f}$ chosen to span the dynamic system bandwidth, amplitudes $\alpha_j$ tuned so that the input saturates only occasionally, phases $\varphi_j^{(\ell)}$ randomized per trajectory, and $\mathrm{sat}_\U(\cdot)$ the saturation enforcing $u \in \U$.

\subsection*{Structured SVD reduction}

Random or kernel-based dictionaries are intentionally over-parameterized: with random Fourier features (Section~\ref{sec:rff}), the recommended dictionary size $\nz$ scales with the desired kernel approximation quality~\cite{RahimiRecht2007} and is typically much larger than the effective dimension of the data manifold. As a consequence, the lifted state matrix $Z$ has numerical rank far below $\nz$, and the Khatri--Rao feature matrix $\Phimat = Z \krrao V$ inherits and amplifies this rank deficiency. We therefore reduce both liftings to their effective ranks via SVD, independently in $z$ and $v$.

For the state lifting, let $\bar Z := [Z\;\;Z^+] \in \Real^{\nz \times 2M}$ collect all lifted state samples (current and next), and consider the eigendecomposition of the symmetric positive semidefinite Gram matrix
\begin{equation}
    G_z \;:=\; \bar Z \bar Z^\top
        \;=\; U_z\, \Sigma_z^2\, U_z^\top,
\label{eq:Gz}
\end{equation}
with $U_z \in \Real^{\nz \times \nz}$ orthonormal and $\Sigma_z = \diag(\sigma_1^z, \ldots, \sigma_{\nz}^z)$ sorted in decreasing order. Choose the truncation rank $r_z \le \nz$ as the smallest index such that $\sigma_{r_z+1}^z \le \tau_z \, \sigma_1^z$ for a relative tolerance $\tau_z > 0$, and let $U_{z,r_z} := U_z(:,1{:}r_z)$. The \emph{reduced state lifting} is then
\begin{equation}
    \tilde \Psi_x(x) \;:=\; U_{z,r_z}^\top\, \Psix(x) \;\in\; \Real^{r_z},
\label{eq:reduce-z}
\end{equation}
and we write $\tilde z := U_{z,r_z}^\top z$. The same construction applied to the input data Gram matrix $G_v := V V^\top = U_v \Sigma_v^2 U_v^\top$ yields a truncation rank $r_v$ and a reduced input lifting
\begin{equation}
    \tilde\Psi_u(u) \;:=\; U_{v,r_v}^\top\, \Psiu(u) \;\in\; \Real^{r_v},
\label{eq:reduce-v}
\end{equation}
with $\tilde v := U_{v,r_v}^\top v$. The tolerances $\tau_z$ and $\tau_v$ are selected by a coarse bisection on the multi-step feature-prediction score $r^z_N$ of~\eqref{eq:multistep-profile}, evaluated on the selected windows: among the candidate tolerances whose score lies within a small relative band ($5\%$) of the best, the one yielding the smallest rank is retained, trading a negligible loss in prediction accuracy for a lower-dimensional model and faster online optimization.

After the reduction step we obtain the following reduced order Khatri--Rao GeKo realization:
\begin{equation}
\begin{split}
    \tilde z_{t+1} \;&=\; \tilde K \big(\tilde z_t \kron \tilde v_t\big),\\
    \tilde K \;&=\; U_{z,r_z}^\top\, K \,
        \big(U_{z,r_z} \kron U_{v,r_v}\big) \in \Real^{r_z \times r_z r_v}.
        \end{split}
\label{eq:reduced-model}
\end{equation}

\begin{remark}[Refit vs.\ projection]
\label{rem:refit}
In practice, $\tilde K$ is not computed via the projection formula in~\eqref{eq:reduced-model} but rather \emph{refit} by multi-step ridge regression in the reduced space, applying~\eqref{eq:multistep-soln} with the projected matrices $\tilde\Phimat_k := (U_{z,r_z}^\top \otimes U_{v,r_v}^\top) \Phimat_k$ and $\tilde W_k := U_{z,r_z}^\top W_k$. By orthonormality of $U_{z,r_z}, U_{v,r_v}$ the projection and refit estimates coincide in the noiseless infinite-data limit; in finite samples the refitted estimate is optimal in the reduced space and is therefore preferred.
\end{remark}
\begin{remark}[Kernel-agnostic]
\label{rem:kernel-agnostic}
The developed structured SVD reduction does not use the specific form of $\Psix$ or $\Psiu$. The two-stage SVD reduction therefore applies to any choice of state and input lifting dictionary -- random Fourier features, wavelet bases, neural-network encoders, or time-delay embeddings -- and preserves the Khatri--Rao GeKo realization in each case.
\end{remark}

\subsection*{Linear decoder}

The reduced lifted state $\tilde z$ is paired with a linear decoder $\hat x = D \tilde z$, where $D \in \Real^{\nx \times r_z}$ is identified by ridge regression on the training data,
\begin{equation}
    D \;=\; X \tilde Z^\top \big(\tilde Z \tilde Z^\top + \gamma I\big)^{-1},
\label{eq:decoder-soln}
\end{equation}
with $X = [x^{(\ell)}_t]$ and $\tilde Z = U_{z,r_z}^\top [\Psix(x^{(\ell)}_t)]$ collecting all training state--snapshot pairs. The corresponding lifted MPC weight is $Q_{\tilde z} := D^\top Q_x D$, so that the cost on $\tilde z$ penalizes only the linear combinations that decode to the physical state.

\section{Random Fourier Features Realization and Predictive Control Formulation}
\label{sec:rff}

The framework of Sections~\ref{sec:multistep}--\ref{sec:reduction} is agnostic to the choice of $\Psix$ and $\Psiu$. In this section we instantiate it with \emph{random Fourier features} (RFF), summarize the complete learning pipeline, and formulate the resulting predictive controller.

\subsection*{Random Fourier feature lifting}
We lift the state with the random Fourier feature map of~\cite{RahimiRecht2007},
which approximates a shift-invariant kernel by sampling frequencies from its
spectral density. For the Gaussian (RBF) kernel of length scale $\sigma_x$, the
frequencies are drawn $\omega_1,\ldots,\omega_{\nz} \stackrel{\rm iid}{\sim}
\mathcal{N}(0,\sigma_x^{-2}I)$ and the biases
$b_i \stackrel{\rm iid}{\sim} \mathrm{Unif}[0,2\pi]$, giving
\begin{equation}
    \Psix(x) \;=\; \sqrt{\tfrac{2}{\nz}}\,
        \big[\cos(\omega_1^\top x + b_1),\ldots,
             \cos(\omega_{\nz}^\top x + b_{\nz})\big]^\top.
\label{eq:rff-x}
\end{equation}
The input lifting $\Psiu(u) \in \Real^{\nv}$ is constructed analogously with
frequency samples $\omega_j' \sim \mathcal{N}(0,\sigma_u^{-2}I)$ and uniform
biases $b_j'$, for $j = 1,\ldots,\nv$.

The frequency spreads $\sigma_x, \sigma_u$ play the role of RBF kernel length scales and are set on the order of $\mathrm{range}(x)$ and $\mathrm{range}(u)$ respectively; the dictionary sizes $\nz, \nv$ are intentionally oversized, since the SVD reduction of Section~\ref{sec:reduction} compresses them to the effective rank of the data manifold.

\subsection*{Predictive control formulation}

Letting $\Delta u_k:=u_k-u_{k-1}$, at each time instant $k$, MPC solves
\begin{equation}
\begin{aligned}
    \min_{u_{0:N_\text{mpc}-1}}\;\;
        & \sum_{k=0}^{N_\text{mpc}}
            \|D \tilde z_k - x_{\rm ref}\|^2_{Q_x}
            +\sum_{k=0}^{N_\text{mpc}-1}\!\!
            (\!\|u_k\|_{R_u}^2
                  + \|\Delta u_k\|_{R_{\Delta u}}^2\!) \\
    \text{s.t.}\quad
        & \tilde z_{k+1} = \tilde K\big(\tilde z_k \kron \tilde v_k\big),
          \;\; \tilde v_k = \tilde\Psi_u(u_k), \\
        & \tilde z_0 = U_{z,r_z}^\top \Psix(x_t),
          \;\; u_k \in \U, \;\; k = 0,\ldots,N_\text{mpc}-1.
\end{aligned}
\label{eq:mpc}
\end{equation}
The first input $u_0^\star$ of the optimal sequence is applied to the plant, the state is measured at the next sampling instant, and the optimization is repeated. The input-rate penalty promotes smooth control and dampens chattering.

The Algorithm~\ref{alg:konverge} summarizes the GeKo MPC pipeline driven by data from end-to-end.

\begin{algorithm}[t]
\caption{Data-driven Generalized Koopman MPC}
\label{alg:konverge}
\begin{algorithmic}[1]
\STATE Generate $L$ training trajectories with the multi-frequency excitation~\eqref{eq:input-design}.
\STATE Sample RFF parameters $\{(\omega_i,b_i)\}_{i=1}^{\nz}$ and $\{(\omega_j',b_j')\}_{j=1}^{\nv}$ as in~\eqref{eq:rff-x}.
\STATE Lift the trajectories to $\{z_t^{(\ell)}\}, \{v_t^{(\ell)}\}$ and compute the SVD reductions $U_{z,r_z}, U_{v,r_v}$ per~\eqref{eq:reduce-z}--\eqref{eq:reduce-v}, with the truncation tolerances chosen by bisection on the multi-step score $r^x_N$ of~\eqref{eq:multistep-profile}.
\STATE Project the lifted trajectories, $\tilde z_t = U_{z,r_z}^\top z_t$, $\tilde v_t = U_{v,r_v}^\top v_t$.
\STATE Collect time-shifted snapshot windows~\eqref{eq:window} and select a representative subset $\mathcal{S}$ of size $M_a$ by $k$-means clustering of windows of initial states.
\STATE Build the augmented matrices $\tilde\Phimat, \tilde W$ as in~\eqref{eq:aug-data} (in the reduced space, cf.\ Remark~\ref{rem:refit}) and solve the multi-step ridge regression~\eqref{eq:multistep-soln} for $\tilde K$.
\STATE Fit the linear decoder $D$ per~\eqref{eq:decoder-soln} and form $Q_{\tilde z} = D^\top Q_x D$.
\STATE Compute the multi-step profile $\{r^x_k\}_{k=1}^{N_d}$ and choose the MPC horizon $N_\text{mpc}$ via~\eqref{eq:Hp-rule}.
\STATE Deploy the predictive controller~\eqref{eq:mpc}.
\end{algorithmic}
\end{algorithm}


\section{Validation on the chaotic Lorenz system}
\label{sec:experiments} 
We consider the controlled Lorenz system
\begin{equation}
\begin{aligned}
    \dot x_1 &= \sigma\,(x_2 - x_1), \\
    \dot x_2 &= x_1(\rho - x_3) - x_2 + u, \\
    \dot x_3 &= x_1 x_2 - \beta\,x_3,
\end{aligned}
\label{eq:lorenz}
\end{equation}
with the canonical chaotic parameters $\sigma = 10$, $\rho = 28$, $\beta = 8/3$, and a single scalar input $u$ entering the second state equation.\footnote{Actuation on $\dot x_2$ follows the chaos-control formulation of~\cite{Yang2002control}.} The uncontrolled flow possesses the two symmetric unstable equilibria $\pm\,x^\star$ with $x^\star = \big(\sqrt{\beta(\rho-1)},\,\sqrt{\beta(\rho-1)},\,\rho-1\big)^\top \approx (8.49,\,8.49,\,27)^\top$, around which a typical trajectory orbits chaotically, as shown in Figure~\ref{fig:lorenz-chaos}.
\begin{figure}[t]
  \centering
  \includegraphics[width=1\columnwidth]{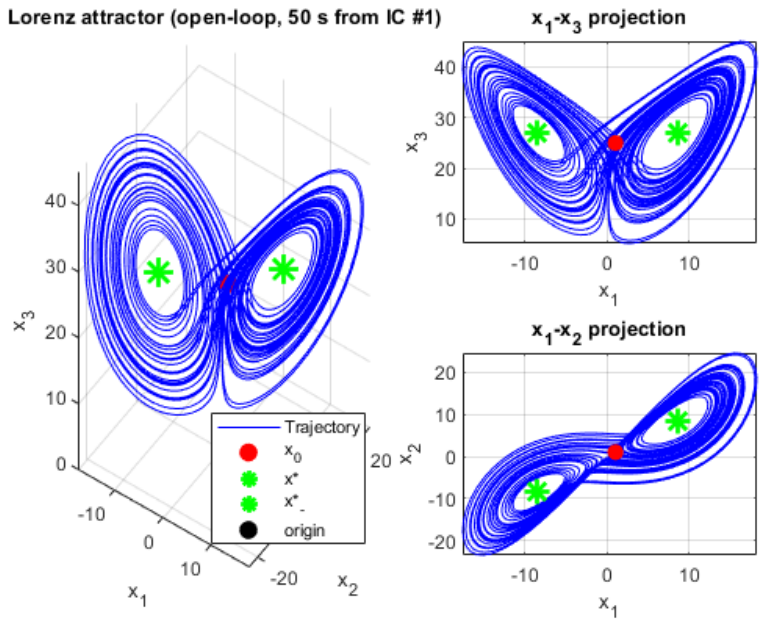}
  \caption{Uncontrolled ($u = 0$) trajectory of the Lorenz
    system~\eqref{eq:lorenz}, tracing the chaotic attractor about the two
    unstable equilibria $\pm\,x^\star$ (green) and the origin (black).}
  \label{fig:lorenz-chaos}
\end{figure}
The control objective is to stabilize the equilibrium $x^\star$, i.e.\ $x_{\rm ref} = x^\star$ in~\eqref{eq:mpc}. The continuous dynamics are sampled at $T_s = 0.01$, giving the discrete-time plant on which the GeKo model and the predictive controller~\eqref{eq:mpc} operate.
 
We generate $L = 8$ training trajectories of $5000$ samples each ($50$~s of data, $4\times10^4$ state--input pairs total). Each trajectory is initialized from a point drawn uniformly from the box $x_{1},x_{2}\in[-20,20]$, $x_{3}\in[-5,55]$ that covers the attractor, and is forced by the multi-frequency excitation~\eqref{eq:input-design}
\begin{equation}
    u^{(\ell)}_k \;=\; \mathrm{sat}_{[-30,30]}\!\Big(
        \textstyle\sum_{m=1}^{6} a\,\sin(\omega_m k T_s + \phi^{(\ell)}_m)\Big),
\label{eq:lorenz-input}
\end{equation}
with amplitude $a = 6$, fixed angular frequencies $\omega_m \in \{0.3,0.7,1.1,1.9,3.1,5.3\}$~rad/s, and phases $\phi^{(\ell)}_m \sim \mathrm{Unif}[0,2\pi]$ resampled per trajectory; the excitation is saturated to the deployment bound $\U = [-30,30]$, so that the training data exercise the full admissible input range. Trajectories are integrated with an adaptive Runge--Kutta scheme (\texttt{ode45}, relative and absolute tolerances $10^{-8}, 10^{-10}$); inside the MPC loop the plant is advanced with a fixed-step RK4 using four substeps per $T_s$, which reproduces the adaptive solution within $2\times10^{-8}$ accuracy on the attractor at a fraction of the cost.
 
The state is lifted with $\nz = 400$ random Fourier features~\eqref{eq:rff-x} of length scale $\sigma_x = 10$, and the scalar input with $\nv = 20$ features of length scale $\sigma_u = 7.5$. Because the data manifold is only three-dimensional, the lifted state features are highly redundant; the SVD reduction~\eqref{eq:reduce-z} with the bisection tolerance rule of Section~\ref{sec:reduction} compresses $\nz = 400 \to r_z = 150$ while keeping the multi-step score~\eqref{eq:multistep-profile} within $5\%$ of its minimum, and the input reduction~\eqref{eq:reduce-v} compresses $\nv = 20 \to r_v = 13$. All subsequent regression and control are carried out in the reduced coordinates $\tilde z = U_{z,r_z}^\top \Psix(x)$, $\tilde v = U_{v,r_v}^\top \Psiu(u)$.
 
We form time-shifted windows of length $N_d = 20$~\eqref{eq:window}, retain a representative subset of $M_a$ windows by $k$-means clustering on the concatenated anchor snapshots $[\tilde z_0;\,\tilde z_{N_d/2};\,\tilde z_{N_d}]$ (window-aware selection, so that windows sharing a start point but diverging on the attractor are kept distinct), and solve the multi-step ridge regression~\eqref{eq:multistep-soln} with $\gamma = 10^{-4}$ for the reduced Khatri--Rao operator $\tilde K \in \Real^{r_z \times r_z r_v}$ ($150 \times 1950$). The linear decoder $D$ is fit by~\eqref{eq:decoder-soln} and induces the lifted state weight $Q_{\tilde z} = D^\top Q_x D$ used in~\eqref{eq:mpc}.
 
\begin{figure}[t]
  \centering
  \includegraphics[width=0.85\linewidth]{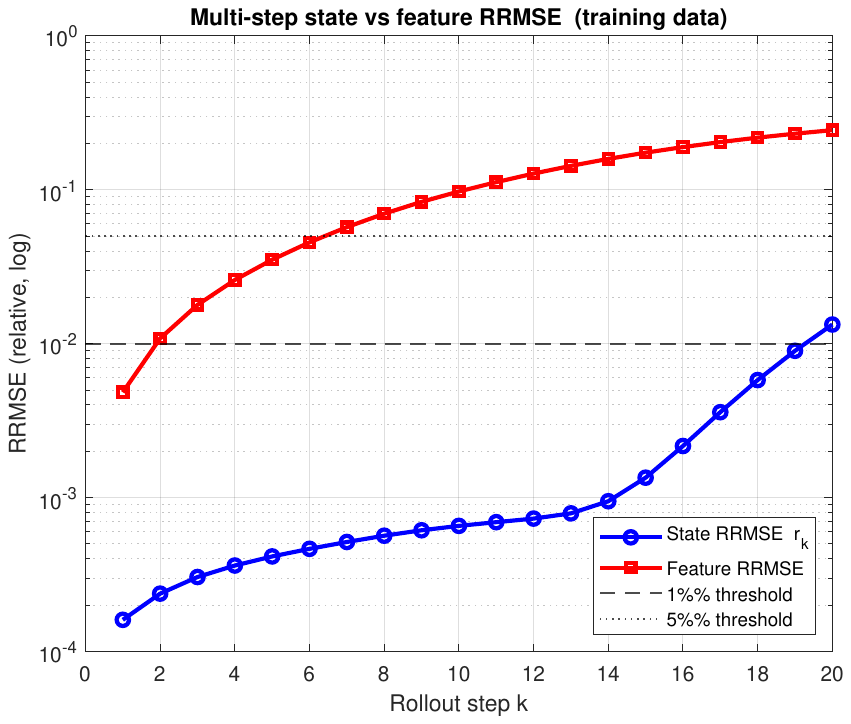}
  \caption{Multi-step open-loop prediction error of the reduced GeKo model for the Lorenz system. 
  }
  \label{fig:lorenz-rrmse}
\end{figure}
As shown in Figure~\ref{fig:lorenz-rrmse}, the multi-step state profile $\{r^x_k\}$ of~\eqref{eq:multistep-profile} stays below $1\%$ relative error out to $k = 19$ steps and below $5\%$ through $k = 20$, so the horizon rule~\eqref{eq:Hp-rule} admits any $N_\text{mpc} \le 20$; we use $N_\text{mpc}=12$ in the closed-loop study below. The MPC controller solves~\eqref{eq:mpc} with state weight $Q_x = I_3$, input weight $R_u = 10^{-2}$, a small input-rate penalty $R_{\Delta u} = 10^{-3}$, and the box constraint $\U = [-30,30]$. The lifted reference is $\tilde z_{\rm ref} = U_{z,r_z}^\top \Psix(x^\star)$, recomputed once offline. At each step the reduced Koopman bilinear model $\tilde z_{k+1} = \tilde K(\tilde z_k \kron \tilde v_k)$ with $\tilde v_k = U_{v,r_v}^\top \Psiu(u_k)$ is used as the prediction model. The resulting nonlinear program is solved with MATLAB's \texttt{fmincon} (SQP algorithm, optimality tolerance $10^{-4}$, at most $100$ iterations per step), warm-started at each step with the shifted previous solution $[\,u_{1:N-1}^\star;\,0\,]$.
\begin{figure}[t]
  \centering
  \includegraphics[width=0.92\linewidth]{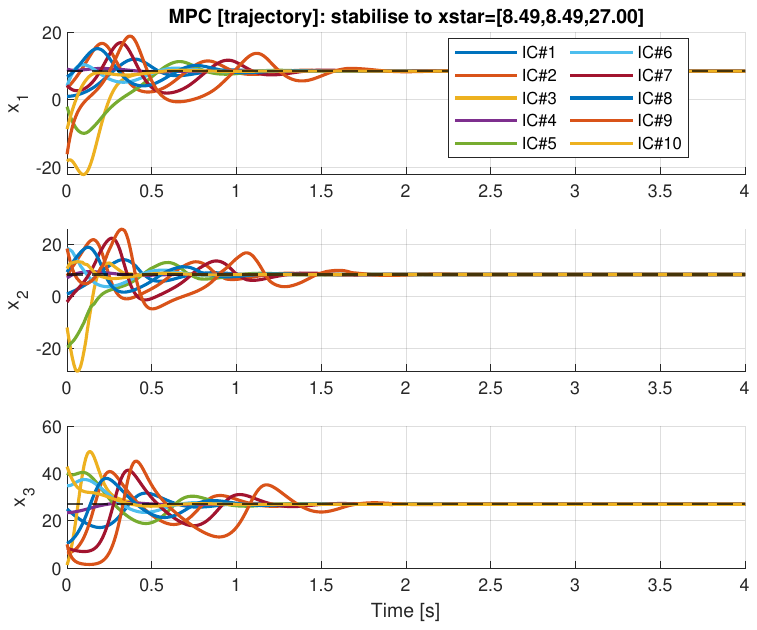}
  \caption{Closed-loop state trajectories under GeKo-MPC from the $10$
    initial conditions and the target
    $x^\star$ (dashed black lines).}
  \label{fig:lorenz-states}
\end{figure}
\begin{figure}[t]
  \centering
  \includegraphics[width=1\linewidth]{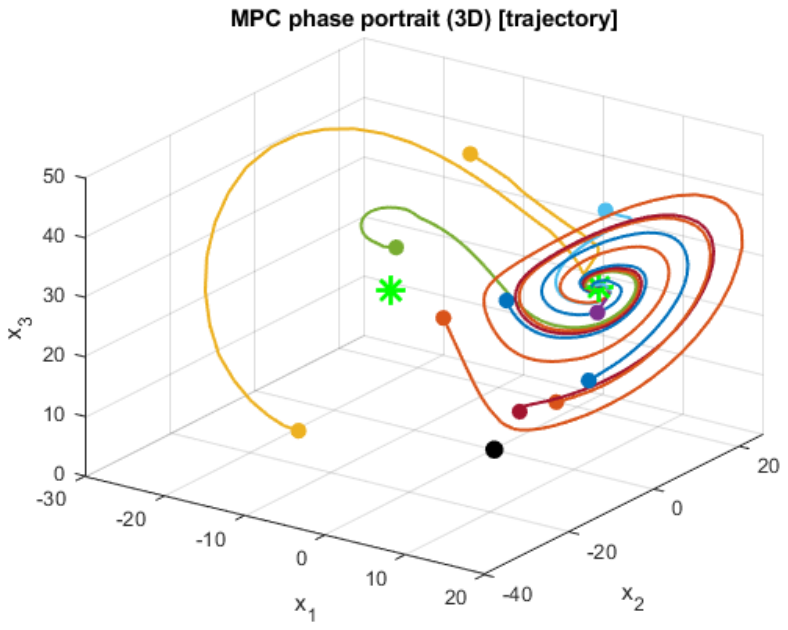}
  \caption{Phase-space view of the $10$ closed-loop trajectories.
       }
  \label{fig:lorenz-phase}
\end{figure}
\begin{figure}[t]
  \centering
  \includegraphics[width=0.9\linewidth]{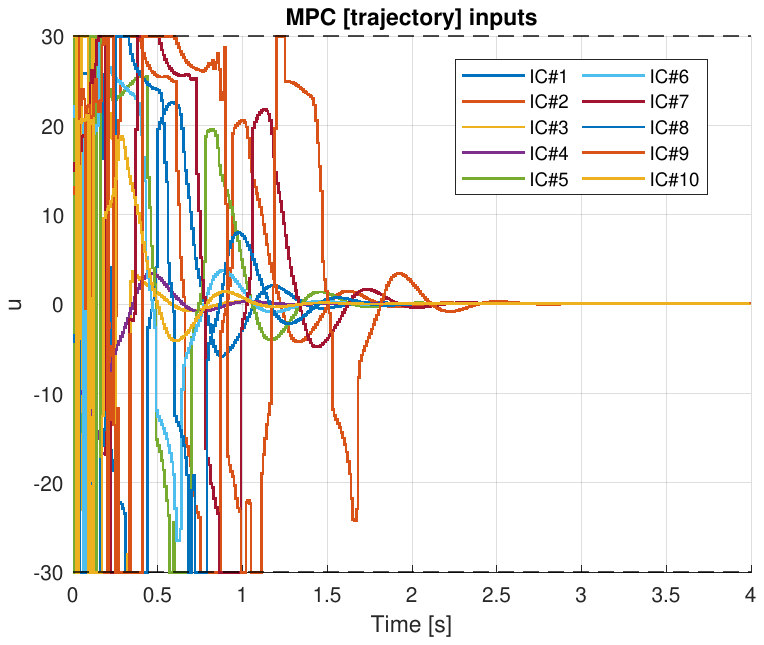}
  \caption{MPC control inputs for the $10$ closed-loop runs.}
  \label{fig:lorenz-inputs}
\end{figure}

We assess closed-loop performance from $10$ initial conditions (ICs): the
nominal $x_0 = (1,1,25)^\top$ together with $9$ reproducible draws
(fixed seed) from the training box $x_{1},x_{2}\in[-20,20]$,
$x_{3}\in[-5,55]$. The resulting closed-loop state trajectories over time, the trajectories in state space, and the input signals are reported in Figures~\ref{fig:lorenz-states},~\ref{fig:lorenz-phase}, and~\ref{fig:lorenz-inputs}, respectively. Averaged over the ten initial conditions, the controller drives the state to a final error of $\|x(T)-x^\star\| = 8\times10^{-3}$, with a mean per-step solve time of $101$~ms (maximum $147$~ms across all runs) and an input that respects the bound $|u|\le 30$ throughout. 

Every run stabilizes the common target $x^\star$ in less than $2$ s, despite starting from both attractor lobes and from off-attractor states, which demonstrates the effectiveness of the developed data-driven Koopman learning framework for MPC.

It is worth mentioning that we could not find another data-driven Koopman MPC study validated on the Lorenz system with both a learned decoder and a genuine Koopman operator; the closest data-driven MPC alternative validated on the Lorenz system, SINDy-MPC~\cite{kaiser2018sindy}, instead actuates the first state channel and relies on a sparsely identified nonlinear model in the classical sense rather than a lifted Koopman model and decoder. A comparison with the Lorenz system set-up of \cite{kaiser2018sindy} is left for future work.

In terms of computational cost, the per-step optimization averaged $101$~ms and never exceeded $147$~ms across all runs; while this is not yet real-time compatible with the $0.01$~s sampling time, it is promising given that a generic MATLAB SQP solver was used on a nonconvex program. 
 
\section{Conclusions}
\label{sec:conclusions}
 
In this paper we presented a data-driven framework for learning finite-dimensional approximate generalized Koopman operator models for predictive control. Firstly, we recalled the specific form of the generalized Koopman operator for control, which yields a Khatri--Rao regression problem. Then we developed a time-sequenced multi-step method to reduce the multi-step prediction error of the model fitted via kernel regression. To reduce the dimension of the generalized Koopman model we used a structured SVD approach which eliminates redundant features. Finally, the complete framework was implemented in an MPC pipeline and validated on the chaotic Lorenz system, using random Fourier features. 

Future work will deal with further optimizing GeKo models for MPC and tailored efficient solvers.

\bibliographystyle{IEEEtran}
\bibliography{references}

\end{document}